\newtheorem{thm}{{\bf Theorem}}[section]
\newtheorem{theorem}{{\bf Theorem}}[section]
\newtheorem{pro}[thm]{\bf Proposition}
\newtheorem{preproof}{{\bf Proof.}}
\newenvironment{proof}[1]{\begin{preproof}{\rm
               #1}\hfill{$\rule{2mm}{2mm}$}}{\end{preproof}}
\begin{document}
\title{\Large {\bf Metric dimension of Andr\'asfai graphs}}

\author{
{\sc S. Batool Pejman\thanks{b.pejman@edu.ikiu.ac.ir}} , {\sc Shiroyeh Payrovi\thanks{shpayrovi@ikiu.ac.ir}} , {\sc Ali Behtoei\thanks{Corresponding author, a.behtoei@sci.ikiu.ac.ir}}  \\
[1mm]
{\it \small Department of Mathematics, Imam Khomeini International University, 34149-16818, Qazvin, Iran }
}
\date{}

\maketitle

\begin{abstract}
 A set $W\subseteq V(G)$ is called a  resolving set, if
for each pair of distinct vertices $u,v\in V(G)$ there exists $t\in W$
such that $d(u,t)\neq d(v,t)$,  where $d(x,y)$ is the distance
between  vertices $x$ and $y$. The cardinality of a minimum
resolving set for $G$ is called the  metric dimension of $G$ and
is denoted by $\dim_M(G)$.  This parameter has many applications in different areas. The problem of finding metric dimension is NP-complete for general graphs but it is determined for trees and some other important families of graphs.
In this paper, we determine the exact value of the metric dimension of $Andr\acute{a}sfai$ graphs, their complements and $And(k)\square P_n$. 
Also, we provide  upper and lower bounds for $dim_M(And(k)\Box C_n)$.
\end{abstract}

{\bf Key words:}  Resolving set, Metric dimension,   Andr\'asfai graph, Cayley graph.
\\
{\bf MSC code 2010: } 05C12, 05C25. 

\section{Introduction}

Throughout this paper all graphs are finite, simple and undirected. 
Let $G=(V,E)$ be a  connected graph with vertex set $V$ and edge set $E$. 
The distance between two vertices $x,y\in V$ is the length of a shortest path between them and is denoted by $d_G(x,y)$, or $d(x,y)$ for convenient. 
The neighborhood of $x$ is $N(x)=\{y\in V :~d(x,y)=1 \}$   and the diameter of $G$ is  $diam(G)=\max\{ d(x,y):~x,y\in V\}$. 
It is well known that almost all graphs have diameter 2.
The notations $\overline{G}$ and $Line(G)$ stand for the complement graph and  the line graph of $G$, respectively.
For an ordered subset $W=\{w_1,w_2,\ldots,w_k\}$ of vertices and a
vertex $v\in V$, the $k$-vector
$r(v|W):=(d(v,w_1),d(v,w_2),\ldots,d(v,w_k))$ is  called  the
\textit{metric representation} of $v$ with respect to $W$ (the \textit{code} of $v$, for convenient). The set $W$ is
called  a \textit{resolving set} for $G$ if distinct vertices of $G$ have
distinct metric representations with respect to $W$.
The cardinality of a  minimum resolving set is  the \textit{metric dimension} of $G$ and is denoted by $\dim_M(G)$.
 A graph with metric dimension $k$ is called $k$-\textit{dimensional}.
These concepts were introduced by  Slater in 1975 when he was working with U.S. Sonar and Coast Guard Loran stations and he described the usefulness of these concepts, \cite{Slater1975}.
Independently, Harary and Melter \cite{Harary} discovered these
concepts.
They have applications in many areas
including network discovery and verification~\cite{net2}, robot navigation~\cite{landmarks}, problems of pattern recognition and image processing~\cite{digital},
coin weighing problems~\cite{coin},
strategies for the Mastermind game~\cite{Mastermind},
combinatorial search and optimization~\cite{coin}.
Determining the metric dimension of different families of graphs, operations and products, or characterizing $n$-vertex graphs with a specified metric dimension are fascinating problems and atracts the attention of many researchers.
The problem of finding metric dimension is NP-Complete for general graphs  but the metric dimension of trees can be  obtained  using a polynomial time algorithm \cite{landmarks}.
 It is not hard to see that for each $n$-vertex graph $G$ we have $1\leq \dim_M(G) \leq  n-1$.
Khuller $et al.$~\cite{landmarks} and Chartrand et al.~\cite{Ollerman}
  proved that $\dim_M(G)=1$ if and only if $G$ is a path $P_n$.
 Chartrand et al.~\cite{Ollerman} proved that for $n\geq 2$, $\dim_M(G)=n-1$
if and only if $G$ is the complete graph $K_n$. The metric dimension of each complete $t$-partite graph with $n$ vertices is $n-t$. They also provided a
 characterization of  graphs of order $n$ with
metric dimension $n-2$, see \cite{Ollerman}.  Graphs of order $n$  with metric dimension $n-3$ are characterized in~\cite{n-3}.
B\'ela Bollob\'as studied the metric dimension of random graphs \cite{Bollobas}.
C$\acute{a}$ceres $et~al.$ studied  this parameter for the Cartesian product of graphs \cite{cartesian product}.
Bailey and Cameron \cite{Cameron} have computed the exact value of the metric dimension for the diameter 2 Kneser and Johnson graphs.
Fijav$\check{z}$ and Mohar studied this parameter for Paley graphs  \cite{Mohar}.
Salman $et~al.$ studied this parameter for the Cayley graphs on cyclic groups \cite{2}.
In  \cite{CayleyDiGraph} and \cite{Fehr} the metric dimension of Cayley digraphs for the groups which are direct product of some cyclic groups is investigated.
Imran studied the metric dimension of barycentric subdivision of Cayley graphs in \cite{Imran}.
  Each cycle graph $C_n$ is a  $2$-dimensional graph . 
  In~\cite{chang}  some properties of $2$-dimensional graphs are obtained.
  All of  2-trees with metric dimension two are characterized in \cite{2-tree}, $2$-dimensional Cayley graphs on Abelian groups are characterized in \cite{CayAbel} and   $2$-dimensional Cayley graphs on dihedral groups are characterized in \cite{Dihedral}.
For more results in this subject or related subject see  \cite{?}.

Recall that the {\em Cartesian product} of two graphs $G_1$ and $G_2$, denoted by $G_1\Box G_2$, is a graph with vertex set $V(G_1)\times V(G_2):=\{(u,v): u\in V(G_1), v\in V(G_2)\}$, in which $(u,v)$ is adjacent to $(u',v')$ whenever $u=u'$ and $vv'\in E(G_2)$, or $v=v'$ and $uu'\in E(G_1)$. 
Note that the vertex set of $G_1\Box G_2$ can be arranged in $|V(G_2)|$ rows and $|V(G_1)|$ columns.
Also if $G_1$ and $G_2$ are connected, then $G_1\Box G_2$ is connected.
Let $H$ be a group and let $S$ be a subset of $H$ that is closed under taking inverse and does not contain the identity element.
Recall that the {\it Cayley graph} $Cay(H,S)$ is a simple graph whose vertex set is $H$ and two vertices $u$ and $v$ are adjacent in it when $uv^{-1}\in S$.
For any integer $k\geq 1$, the  Andr\'asfai graph $And(k)$ is the Cayley graph $Cay(\mathbb{Z}_{3k-1},S)$ where $\mathbb{Z}_{3k-1} =\{1,2, ..., 3k-1=0\}$ is the additive group  of integers modulo $3k-1$ and $S=\{1,4,7,..., 3k-2  \}$ is the subset of $\mathbb{Z}_{3k-1}$ consisting of the elements congruent to $1$ modulo $3$.  Note that  $And(1)$  is a path with two vertices, $And(2)$ is isomorphic to the $5$-cycle and $And(3)$ is a $M\ddot{o}bius~ladder$. 
It is well known that $And(k)$ is a reduced (twin free), circulant, vertex transitive,  triangle-free and  $k$-regular graph whose diameter is two for $k\geq 2$.



In this paper, we determine the exact value of the metric dimension of $Andr\acute{a}sfai$ graphs, their complements and $And(k)\square P_n$. 
Also, we prove that $k\leq dim_M(And(k)\Box C_n)\leq k+1$.

\section{Main Results}
Note that if $W$ is a resolving set for $G$, then for each $v\in V\setminus W$ the set $W\cup \{v\}$  is a larger resolving set for $G$. 
Also, when $G$ is a graph with diameter 2, then $W\subseteq V$ is a resolving set for $G$ if and only if  for each pair of distinct vertices $u,v\in V\setminus W$ there exist $w\in W$ such that $\{d(w,u),d(w,v)\}=\{1,2\}$. 

\begin{theorem} \label{And(k)}
Let $k\geq 1$ be an integer. Then, the metric dimension of the Andr\'asfai graph $And(k)$ is $k$.
\end{theorem}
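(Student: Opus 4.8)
The plan is to change coordinates so that every neighbourhood of $And(k)$ becomes a cyclic interval, and then to read off both inequalities from this picture. Since $\gcd(3,3k-1)=1$, multiplication by $3^{-1}\equiv k\pmod{3k-1}$ is a group automorphism of $\mathbb{Z}_{3k-1}$; a direct check shows it carries $S=\{1,4,\dots,3k-2\}$ onto $C:=\{k,k+1,\dots,2k-1\}$ (for instance $k(3j+1)\equiv k+j$), and $-C=C$ because $(3k-1)-k=2k-1$. Hence $And(k)\cong Cay(\mathbb{Z}_{3k-1},C)$, and from now on I take the connection set to be the block $C$; equivalently, listing the vertices in the cyclic order $0,3,6,\dots\pmod{3k-1}$, the neighbourhood of the $p$-th vertex is the block of positions $\{p+k,p+k+1,\dots,p+2k-1\}$. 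The case $k=1$ is trivial since $And(1)=P_2$, so assume $k\ge 2$; then $diam(And(k))=2$, and by the criterion recalled just before the theorem, $W$ is a resolving set if and only if the ``codes'' $c(v):=\{w\in W:\ v\in N(w)\}$ are pairwise distinct over $v\in V\setminus W$.

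For the lower bound, let $W$ be a resolving set with $|W|=m$. Each $N(w)$ is a cyclic interval, so when one traverses $\mathbb{Z}_{3k-1}$ once the predicate ``$v\in N(w)$'' switches value exactly twice; hence the set of cyclically consecutive pairs of vertices with different codes is contained in the union of the $m$ two-element ``boundary'' sets of the intervals $N(w)$, so it has size at most $2m$, and therefore $c$ is constant on each of at most $2m$ maximal runs of consecutive vertices. Two vertices of $V\setminus W$ lying in the same run have the same code and so are not resolved; thus each run contains at most one vertex of $V\setminus W$, which gives $(3k-1)-m\le 2m$, i.e.\ $3m\ge 3k-1$, and since $m$ is an integer, $m\ge k$.

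For the upper bound I will exhibit a resolving set of size $k$, namely the block $W=\{0,1,\dots,k-1\}$. A short computation shows that for $v\in V\setminus W=\{k,\dots,3k-2\}$ one has $c(v)=\{w\in\{0,\dots,k-1\}:\ v-2k+1\le w\le v-k\}$, which equals the initial segment $\{0,1,\dots,v-k\}$ of $W$ when $k\le v\le 2k-1$ and the final segment $\{v-2k+1,\dots,k-1\}$ of $W$ when $2k\le v\le 3k-2$. Initial segments of distinct sizes are distinct, final segments of distinct sizes are distinct, and an initial segment always contains $0$ while a nonempty final segment never does; hence all $2k-1$ codes are distinct, $W$ is resolving, and $\dim_M(And(k))\le k$. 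Combined with the previous paragraph this yields $\dim_M(And(k))=k$.

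The only step with real content is the reformulation in the first paragraph: once the connection set is an interval, each bound is essentially a one-line counting argument. Working directly with $S$ the same block-counting idea still applies, but it is obscured by the wrap-around bookkeeping in $\mathbb{Z}_{3k-1}$ (neighbourhoods of the form $v+S$ are not intervals there), so establishing the interval structure — and carrying out the explicit code computation in the upper-bound step — is where I expect to spend the most care.
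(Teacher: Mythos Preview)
Your proof is correct and takes a genuinely different route from the paper's.

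The key difference is your change of coordinates $x\mapsto kx$ turning the connection set into the contiguous block $C=\{k,\dots,2k-1\}$. The paper works throughout with the original $S=\{1,4,\dots,3k-2\}$. For the upper bound the two are really the same argument in disguise: your $W=\{0,1,\dots,k-1\}$ is, back in the original labelling, the translate $S-1$ of the paper's resolving set $S$, and both verifications amount to checking that the two ``gap'' vertices between successive landmarks get distinct codes; your interval picture just makes this immediate.

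The lower bounds, however, are substantively different. The paper assumes a resolving set of size $k-1$, partitions $\mathbb{Z}_{3k-1}\setminus W$ into the gaps $B_{i_j}$ between cyclically consecutive landmarks, argues that no gap can have size $\ge 4$, and then runs a parity/pigeonhole count on the gap-size multiplicities $(\beta_0,\beta_1,\beta_2,\beta_3)$ to force two size-$3$ gaps separated only by size-$2$ gaps, from which it manufactures two unresolved vertices explicitly. Your argument is much shorter and more structural: since each $N(w)$ is a cyclic interval, the code function can change at most $2|W|$ times around the cycle, hence there are at most $2|W|$ constant-code runs, each containing at most one vertex of $V\setminus W$; this yields $3k-1-|W|\le 2|W|$ directly. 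Your bound is cleaner and in fact applies verbatim to any diameter-two circulant whose connection set is (conjugate to) an interval; the paper's argument is more hands-on but tied to the specific arithmetic of $And(k)$.
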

\begin{proof}{
By investigation, it is easy to see that $\dim_M(And(k))=k$ for $k\in\{1,2,3\}$. Hence, we assume that $k\geq 4$. 
Note that $And(k)=Cay(\Bbb{Z}_{3k-1}, S)$ where  $S=\{1,4,7, ...,3k-2\}$. Since $1\in S$, $And(k)$ contains the Hamiltonian cycle $1,2,3,...,3k-1$ and we can consider a drawing of it in such a way that vertices  are  consecutively ordered clockwise around a cycle.
Hereafter, all of vertex numbers will be cosidered in modulo $3k-1$.
It is straightforward to check that the vertex $0=3k-1$ is adjacent to every vertex in $S$ and each vertex $x\neq 0$ has at least one non-adjacent vertex in $S$.
Consider the subset  $\{t,t+3\}$ of $S$ with $1\leq t \leq 3k-5$. 
We have $d(t+1,t)=1=d(t+2,t+3)$ and $d(t+1,t+3)=2=d(t+2,t)$. 
Also,  for each vertex $y\notin \{t,t+1,t+2,t+3\}$ we have $d(y,t)=1$ if and only if $d(y,t+3)=1$ (because in modulo $3k-1$, we have $t-y\in S$ if and only if $ t+3-y\in S$).
Therefore, $r(t+1|S)\neq r(t+2|S)$, $r(y|S)\neq r(t+1|S)$ and $r(y|S)\neq r(t+2|S)$.
This means that  two vertices $t+1$ and $t+2$ have unique codes among the vertices of $And(k)$. 
Since for each vertex $0\neq x\notin S$ there exists $1\leq t \leq 3k-5$ such that $x=t+1$ or $x=t+2$, the code of $x$ is unique. Hence, $S$ is a resolving set for $And(k)$ and this implies that $\dim_M(And(k))\leq |S|=k$. 
\\
In order to complete the proof, it is sufficient to show that $|W|\geq k$ for each resolving set $W$ of $And(k)$.
Suppose on the contrary that there exists a resolving set $W$ of $And(k)$ with $|W|< k$. 
By including some additional vertices to $W$ (if it is necessary) we can assume that $|W|=k-1$.
If there exists a subset of four (clockwise) consecutive vertices $T=\{ i,i+1,i+2,i+3 \}$ such that $T \cap W=\emptyset$, then for each vertex $j\notin T$ we have $d(j,i)=1$ if and only if $d(j,i+3)=1$ (because, $i-j\in S$ if and only if $i+3-j\in S$). 
This implies that two vertices $i$ and $i+3$ have the same metric representations with respect to $W$, which contradics the resolvability of $W$. 
Now, assume that $W=\{ i_1,i_2,...,i_{k-1}  \}$ where 
$$1\leq i_1<i_2<\cdots < i_{k-1}\leq 3k-1.$$
For each $i_j\in W$ (and with the assumption that $i_k=i_1$) let 
$$B_{i_j}=\{ i_j,i_j+1,i_j+2,...,i_{j+1}  \}\setminus \{i_j,i_{j+1}\}.$$
Note that  $B_{i_j}=\emptyset$ just  when $i_j+1=i_{j+1}$ and that $B_{i_j}\cap W=\emptyset$ for each $i_j\in W$. 
Also, using previous facts we have
$$\bigcup_{j=1}^{k-1}B_{i_j}=\Bbb{Z}_{3k-1} \setminus W,~~ \{ |B_{i_j}|:~ i_j\in W\} \subseteq \{0,1,2,3\}.$$
For each $s\in \{0,1,2,3\}$ let $\beta_s$ be the number of blocks $B_{i_j}$ with $|B_{i_j}|=s$.  
Thus, using the fact $|W|=k-1$ we see that
$$\beta_0+\beta_1+\beta_2+\beta_3=k-1$$
and 
$$ 0\beta_0+1\beta_1+2\beta_2+3\beta_3 = | \bigcup_{j=1}^{k-1}B_{i_j} | =(3k-1)-(k-1)=2k.$$
Therefore,
\begin{eqnarray*}
-2\beta_0-\beta_1+\beta_3 &=& (0\beta_0+1\beta_1+2\beta_2+3\beta_3 )-2(\beta_0+\beta_1+\beta_2+\beta_3) \\
& = & (2k)-2(k-1) \\
& = & 2.
\end{eqnarray*}
This implies that $\beta_3=2+2\beta_0+\beta_1\geq 2$. Specially, $\beta_3 > \beta_0+\beta_1$. 
Now the Pigeonhole Principle implies that  there  exist two blocks $B_{i_j},B_{i_{j'}}$ of size 3 such that between them in at least one direction (clockwise or counterclockwise) only blocks of size 2  (if any exists) are located. 
Since  $And(k)$ is vertex transitive,  whithout loss of generality and for convenient, we can assume that $i_j=1$ (i.e $B_{i_j}=B_1$) and  $B_{i_{j'}}$ is located  (in clockwise direction) after $\ell\geq 0$ blocks   of size two (when $\ell\geq 1$ they are  $B_{5},B_8,...,B_{3\ell+2}$), i.e $B_{i_{j'}}=B_{3\ell+5}$. 
Note that for the case $\ell=0$ two blocks $B_{i_{j}}=B_1$ and  $B_{i_{j'}}=B_5$ are consecutive.
Therefore, 
$$W\cap \{1,2,3,...,3\ell+9 \}=\{1,5,...,3\ell+5, 3\ell+9 \}.$$ 
Now consider two vertices $x=2\in B_1$ and $y=3\ell+8\in B_{3\ell+5}$.
Since $y=x+3(\ell+2)$, for each $z\notin \{1,2,3,...,3\ell+9 \}$ we have $d(z,x)=1$ if and only if $d(z,y)=1$. 
Also, it is straightforward to check that
 $$N(x)\cap \{1,5,...,3\ell+5, 3\ell+9 \} = \{1,3\ell+9\}= N(y)\cap \{1,5,...,3\ell+5, 3\ell+9 \}.$$
This means that  $r(x|W)=r(y|W)$, which is a contradiction. Therefore, $|W|\geq k$ and this completes the proof.
}\end{proof}


Note that $And(1)$ is a 2-vertex path and hence, its complement $\overline{And(1)}$ is disconnected. 
$\overline{And(2)}$ is a 5-cycle and its metric dimension is 2.
Also, for each $k\geq 3$ the complement  of $And(k)$ is a connected $(2k-2)$-regular graph and its diameter is two.
\begin{theorem}
For each $k\geq 2$ we have $\dim_M(\overline{And(k)})=k$.
\end{theorem}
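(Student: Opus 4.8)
The plan is to transfer everything to the language of $And(k)$, since the two graphs share the same vertex set $\mathbb{Z}_{3k-1}$ and both have diameter two (for $k\geq 3$; the case $k=2$ is handled by inspection, as $\overline{And(2)}\cong C_5$). The key elementary observation is that for any two distinct non-landmark vertices $u,v$ and any potential landmark $w$, we have $\{d_{\overline{And(k)}}(w,u), d_{\overline{And(k)}}(w,v)\} = \{1,2\}$ if and only if $\{d_{And(k)}(w,u), d_{And(k)}(w,v)\} = \{1,2\}$, because in a diameter-two graph adjacency in $\overline{G}$ is exactly non-adjacency in $G$, so the two conditions are complementary in a way that preserves the "separates the pair" relation. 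Using the diameter-two resolvability criterion quoted just before Theorem~\ref{And(k)}, this shows that $W\subseteq V$ resolves $\overline{And(k)}$ if and only if $W$ resolves $And(k)$. Hence $\dim_M(\overline{And(k)}) = \dim_M(And(k)) = k$ by Theorem~\ref{And(k)}.

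First I would dispose of the small case $k=2$ directly. Then for $k\geq 3$ I would verify that $\overline{And(k)}$ is connected with diameter two: it is $(3k-1)-1-k = 2k-2$ regular, and since $And(k)$ is triangle-free and $k$-regular on $3k-1$ vertices, no vertex of $\overline{And(k)}$ is universal and a short neighbourhood-counting argument (or citing the stated fact that $And(k)$ has diameter two, which dualizes) gives $\mathrm{diam}(\overline{And(k)}) = 2$. Next I would state and prove the equivalence lemma above: fix $w$ and a pair $u\neq v$ with $w\notin\{u,v\}$; then $d_{\overline{G}}(w,u)=1 \iff uw\notin E(G) \iff d_G(w,u)=2$, and similarly for $v$, so exactly one of $u,v$ is adjacent to $w$ in $\overline{G}$ iff exactly one is adjacent to $w$ in $G$. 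Finally, combining this with the diameter-two criterion, $W$ is resolving for one graph iff it is resolving for the other, and the theorem follows.

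The main obstacle — really the only place requiring care — is confirming that $\overline{And(k)}$ genuinely has diameter two for all $k\geq 3$, since the clean equivalence above relies on \emph{both} graphs having diameter two. This needs a small argument: one must check that no vertex of $And(k)$ is adjacent to all others (true since $And(k)$ is $k$-regular and $k < 3k-2$ for $k\geq 2$, so $\overline{And(k)}$ has no isolated vertex) and that $And(k)$ contains no vertex whose closed neighbourhood is everything, equivalently $\overline{And(k)}$ is not a union of a vertex plus the rest — again immediate from regularity — and then that any two vertices at distance $\geq 2$ in $\overline{And(k)}$ (i.e. adjacent in $And(k)$) have a common non-neighbour in $And(k)$; this last point follows because $And(k)$ is triangle-free, so adjacent vertices $x,y$ have disjoint neighbourhoods, and $(3k-1) - |N(x)\cup N(y)\cup\{x,y\}| = 3k-1 - (2k+2) = k-3 \geq 0$ for $k\geq 3$, with equality only at $k=3$ where one checks the Möbius ladder case separately. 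Once diameter two is secured, the rest is purely formal.
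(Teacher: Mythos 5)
Your proposal is correct and takes essentially the same route as the paper: both arguments exploit that $And(k)$ and $\overline{And(k)}$ have diameter two, so complementation simply flips adjacency, a set resolves one graph if and only if it resolves the other, and the result reduces to Theorem~\ref{And(k)} (the paper phrases this as swapping the nonzero entries $1$ and $2$ in every code vector, while you phrase it via the pair-separation criterion, which is the same idea). One small remark: in your diameter-two check, for adjacent $x,y$ in the triangle-free $k$-regular graph $And(k)$ we have $x\in N(y)$ and $y\in N(x)$, so $|N(x)\cup N(y)\cup\{x,y\}|=2k$ rather than $2k+2$; hence there are $k-1\geq 2$ common non-neighbours for every $k\geq 3$ and no separate treatment of the M\"obius ladder case $k=3$ is actually needed.
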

\begin{proof}{
Let $W$ be a  non-empty ordered subset of  $\Bbb{Z}_{3k-1}$ and let $v\in \Bbb{Z}_{3k-1}$ be an arbitrary vertex. 
Assume that  the metric representation of vertex $v$ with respect to $W$ in  $And(k)$ is $r(v|W)=(w_1,w_2,...,w_k)$  and  the metric representation of  $v$ with respect to $W$ in  $\overline{And(k)}$ is $\bar{r}(v|W)=(\bar{w_1},\bar{w}_2,...,\bar{w}_k)$. 
Since both graphs $And(k)$ and  $\overline{And(k)}$ have diameter two,  for each $i\in\{1,2,...,|W|\}$ we have
$$\bar{w}_i=\left\{ \begin{array}{ll}  0 & w_i=0 \\ 1 & w_i=2 \\ 2 & w_i=1.    \end{array}\right.$$
This means that for each vertex $u$ we have $r(v|W)=r(u|W)$ if and only if $\bar{r}(v|W)=\bar{r}(u|W)$.
Thus, there is a one-to-one correspondence between the vectors $\{r(v|W)~|~v\in V(And(k)\}$ and the vectors $\{\bar{r}(v|W)~|~v\in V(\overline{And(k)}\}$ by a switching on non-zero components.
In the proof of Theorem \ref{And(k)} we see that $S$ is a minimum resolving set for $And(k)$ and hence, $|\{r(v|S)~|~v\in V(And(k)\}|=|V(And(k)|$. 
Therefore,  $S$ is a minimum resolving set for $\overline{And(k)}$  and the result follows. 
}\end{proof}

In the following theorem we determine $\dim_M(And(k)\square P_n)$ and $\dim_M(\overline{And(k)}\square P_n)$.
\begin{theorem}
For each $k\geq 1$ and $n\geq 2$ we have 
$$\dim_M(And(k)\square P_n)=\dim_M(\overline{And(k)}\square P_n)=k.$$
Specially, the metric dimension of the prism generated by $And(k)$ or its complement is $k$.
\end{theorem}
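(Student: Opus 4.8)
The plan is to prove $\dim_M\ge k$ for both products by a projection argument, and $\dim_M\le k$ by placing a suitable $k$-element resolving set of the non-path factor entirely in the first column. Throughout I use the product distance formula $d\big((u,i),(v,j)\big)=d_G(u,v)+|i-j|$ in $G\square P_n$, and I assume $k\ge 2$ (for $k=1$ the claim is degenerate: $\overline{And(1)}$ is disconnected and $And(1)\square P_n$ is a ladder). For the lower bound, if $W$ resolves $G\square P_n$ with $G\in\{And(k),\overline{And(k)}\}$, then for distinct $u,v\in V(G)$ the pair $(u,1),(v,1)$ must be resolved by some $(w,j)\in W$, and cancelling the $|1-j|$ terms gives $d_G(u,w)\ne d_G(v,w)$; hence the projection $\{w:(w,j)\in W\text{ for some }j\}$ resolves $G$, so $|W|\ge\dim_M(G)=k$ (Theorem~\ref{And(k)} for $G=And(k)$, and the theorem preceding this one for $G=\overline{And(k)}$).

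For $And(k)\square P_n$ I would take $W=S\times\{1\}$, where $S=\{1,4,\dots,3k-2\}$ is the resolving set exhibited in the proof of Theorem~\ref{And(k)}. Pairs in the same column are separated since $S$ resolves $And(k)$, and pairs $(u,i),(u,j)$ with $i\ne j$ are immediate. For $(u,i),(v,j)$ with $u\ne v$ and $i<j$ I split on $j-i$: if $j-i\ge 3$, every landmark separates them because distances in $And(k)$ are at most $2$; if $j-i=2$, simultaneous failure of all landmarks would force $v=w$ for every $w\in S$, impossible as $|S|\ge 2$; if $j-i=1$, simultaneous failure means $d(u,w)=d(v,w)+1$ for all $w\in S$, so $(d(u,w),d(v,w))\in\{(1,0),(2,1)\}$ for each $w$. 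If some $w_0$ realizes $(1,0)$, then $v=w_0$ while $v\sim w$ for the remaining $w\in S$, contradicting that $S$ is an independent set ($S=N(0)$ and $And(k)$ is triangle-free); otherwise every $w$ realizes $(2,1)$, so $d(u,w)=2$ for all $w\in S$, contradicting that $S\cup N(S)=V(And(k))$. This last identity is the structural core; it is equivalent to $S+S=\mathbb{Z}_{3k-1}\setminus S$, which follows from a short residue computation modulo $3$ (using $3k-1\equiv2\pmod 3$). Hence $\dim_M(And(k)\square P_n)\le k$.

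For $\overline{And(k)}\square P_n$ the naive set $S\times\{1\}$ fails (for instance $(3,i)$ and $(1,i+1)$ collide), so I would instead take $W=S'\times\{1\}$ with $S'=\{0\}\cup(S\setminus\{1\})=\{0,4,7,\dots,3k-2\}$. Two properties of $S'$ are needed. First, $S'$ resolves $\overline{And(k)}$; since complementation swaps the distance values $1$ and $2$ and hence gives a bijection between the metric representations in $And(k)$ and in $\overline{And(k)}$ (as in the proof of the preceding theorem), it suffices that $S'$ resolves $And(k)$. If $x\ne y$ lie outside $S'$ with equal codes with respect to $S'$, the coordinate $0$ first reduces matters to $x,y\notin S\cup\{0\}$, and comparison with the codes with respect to $S$ (which do resolve) then forces $S\cap N(x)=\big(S\cap N(y)\big)\cup\{1\}$ with $1\notin N(y)$; but for such $x$ one checks that $S\cap N(x)$ is an initial segment $\{1,4,\dots,3a+1\}$ of $S$ with $a\le k-2$, while $S\cap N(y)$ is a terminal segment of $S$, and making these agree up to the element $1$ forces $a=k-1$, a contradiction. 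Second --- and this is the reason for the choice of $S'$ --- the induced subgraph $And(k)[S']$ is the star with centre $0$ (the remaining elements of $S'$ lie in the independent set $S$, each adjacent to $0$), hence has no isolated vertex; and $S'$ is not a neighbourhood of $And(k)$ (if $S'=N(u)$ then $u\in N(0)$, so $N(u)$ is disjoint from $N(0)$, contradicting $\emptyset\ne S\setminus\{1\}\subseteq S'\cap N(0)$). Now the same analysis by $j-i\ge 3$ / $=2$ / $=1$ applies verbatim: a cross-column collision in the $j-i=1$ case would force some $v\in S'$ to be isolated in $And(k)[S']$, or else $S'=N(u)$ for some $u$ --- both impossible. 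Thus $\dim_M(\overline{And(k)}\square P_n)\le k$, and taking $n=2$ yields the assertion about prisms.

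The routine part of all this is the three-way bookkeeping over the value of $j-i$. The substantive ingredients are (a) the identity $S+S=\mathbb{Z}_{3k-1}\setminus S$, i.e. that $S$ is a distance-$1$ dominating set of $And(k)$, which drives the $And(k)\square P_n$ case; and (b) for the complement, producing a $k$-element resolving set of $\overline{And(k)}$ whose induced subgraph in $And(k)$ has no isolated vertex and is not a neighbourhood --- the choice $S'=\{0\}\cup(S\setminus\{1\})$ works, and the one genuinely new computation is verifying that $S'$ still resolves $And(k)$ via the initial/terminal-segment description of $S\cap N(\cdot)$.
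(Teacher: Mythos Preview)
Your proof is correct. For $And(k)\square P_n$ you and the paper both take $W=S\times\{v_1\}$; your case split on $j-i$ together with the domination identity $S+S=\mathbb{Z}_{3k-1}\setminus S$ makes explicit what the paper's ``maximal entry equals $t+1$'' observation leaves implicit, and your projection lower bound is precisely the content of the cited Corollary~3.2. The genuine divergence is in the complement. The paper disposes of $\overline{And(k)}\square P_n$ with the phrase ``a similar argument'', but the literal analogue fails: since $S$ is a clique in $\overline{And(k)}$, one has $r_{\overline{And(k)}}(1\mid S)=(0,1,\dots,1)$ while $r_{\overline{And(k)}}(3\mid S)=(1,2,\dots,2)$, so $(3,v_t)$ and $(1,v_{t+1})$ receive the same code with respect to $S\times\{v_1\}$, exactly as you note. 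Your replacement $S'=\{0\}\cup(S\setminus\{1\})$ and the two structural properties you isolate --- $And(k)[S']$ is a star, hence has no isolated vertex, and $S'$ is not a neighbourhood in $And(k)$ --- are just what the $j-i=1$ analysis needs in the complement, and your initial/terminal-segment verification that $S'$ still resolves $And(k)$ is correct. So on the $\overline{And(k)}\square P_n$ half you have actually supplied an argument where the paper offers only a gesture. Your caveat about $k=1$ is also in order: $\dim_M(And(1)\square P_n)=\dim_M(P_2\square P_n)=2\neq 1$ for $n\ge 2$, so the statement as written requires $k\ge 2$.
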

\begin{proof}{
Assume that  $V(P_n)=\{v_1,v_2,...,v_n\}$ and $E(P_n)=\{v_1v_2,v_2v_3,...,v_{n-1}v_n \}$. Hence,
$$V(And(k)\square P_n)=\bigcup_{t=1}^n  \{(1,v_t),(2,v_t),...,(3k-1,v_t )\}$$
and the induced subgraph of $And(k)\square P_n$ on the set $\{(1,v_t),(2,v_t),...,(3k-1,v_t)\}$ is isomorphic to $And(k)$ for each $t\in\{1,2,...,n\}$. 
Using Corollary 3.2 in \cite{cartesian product} and Theorem \ref{And(k)} we see that 
$$k=\max\{ \dim_M(And(k)), \dim_M(P_n)\} \leq \dim_M(And(k)\square P_n).$$
Let $$W=\{ (1,v_1),(4,v_1),(7,v_1),....,(3k-2,v_1)  \}=S\times \{v_1\}.$$
 We want to show that $W$ is a resolving set for $And(k)\square P_n$.
For each $i,j\in \Bbb{Z}_{3k-1}$ and for each $t,t'\in\{1,2,...,n\}$ it is easy to see that 
$$d_{And(k) \square P_n} ((i,v_t),(j,v_{t'}))=d_{And(k)}(i,j)+|t-t'|,$$
which implies that
$$ r((i,v_t)|W)=r(i|S)+(t-1,t-1,...,t-1). $$
Note that except the vertex $(0,v_1)$ whose metric representation with respect to $W$ is the all 1 vector $(1,1,...,1)$,
the metric representation of each vertex $(i,v_1)$ has at least one component equal to 2 and  we have $r((i,v_1)|W)\in \{0,1,2\}^k$. 
Similarly, for each $t\in\{2,3,...,n\}$, except the vertex $(0,v_t)$ whose metric representation  is $(t,t,...t)$, the metric representation of each vertex $(i,v_t)$ has at least one component equal to $t+1$ and  $r((i,v_t)|W)\in \{t-1,t,t+1\}^k$. 
By the proof of Theorem \ref{And(k)}, $S$ is a minimum resolving set for $And(k)$. 
Note that by Lemma 3.1 in \cite{cartesian product} the projection of $W$ onto each copy of $And(k)$ in $And(k)\square P_n$ (i.e the induced
subgraph on each row) resolves the vertices of that copy (row).
Therefore, each pair of distinct vertices $(i,v_t)$ and $(j,v_{t'})$ (with $t=t'$ or $t\neq t'$) have distinct metric representations with respect to $W$. Hence, $W$ is a minimum resolving set for  $And(k)\square P_n$ and $\dim_M(And(k)\square P_n)=k$. Using a similar argument we can show that $\dim_M(\overline{And(k)}\square P_n)=k$.
}\end{proof}

Let $n\geq 3$ be an integer.  Using Theorem 8.6 and Theorem 8.4 in \cite{cartesian product} we see that 
$$\dim_M(And(1)\square C_n)=\dim_M(K_2 \square C_n)=\left\{ \begin{array}{ll}  2 & ~~$n$ ~is ~odd \\ 3 & ~~$n$~ is ~even \end{array} \right. $$
and
$$\dim_M(And(2)\square C_n)=\dim_M(C_5 \square C_n)=3.$$

\begin{pro}
If $k\geq 3$ and $n\geq 3$, then  $k\leq \dim_M(And(k)\square C_n)\leq k+1$.
\end{pro}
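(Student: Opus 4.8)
The plan is to prove the two inequalities separately. The lower bound is immediate from earlier results: $\dim_M(C_n)=2$ and $\dim_M(And(k))=k$ by Theorem \ref{And(k)}, so Corollary 3.2 in \cite{cartesian product} yields $\dim_M(And(k)\square C_n)\ge\max\{k,2\}=k$, using $k\ge 3$.

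For the upper bound I would exhibit a resolving set of size $k+1$. Write $V(C_n)=\{v_1,\ldots,v_n\}$ with $v_iv_{i+1}\in E(C_n)$ (subscripts modulo $n$); as in the previous theorem, $d_{And(k)\square C_n}((i,v_t),(j,v_{t'}))=d_{And(k)}(i,j)+d_{C_n}(v_t,v_{t'})$, so for $W_0:=S\times\{v_1\}$ one has $r((i,v_t)\mid W_0)=r(i\mid S)+d_{C_n}(v_1,v_t)(1,\ldots,1)$. The first and most delicate step is to pin down exactly which pairs $W_0$ fails to resolve. I would use three facts about the code map $i\mapsto r(i\mid S)$ in $And(k)$: (a) $S$ is independent, since $N(0)=S$ and $And(k)$ is triangle-free, so for $s\in S$ the code $r(s\mid S)$ has a single $0$ (in the coordinate of $s$) and every other coordinate equal to $2$; (b) $r(0\mid S)=(1,\ldots,1)$, and this is the unique all-ones code because $S$ resolves $And(k)$; (c) no vertex has code $(2,\ldots,2)$, since such an $i$ would satisfy $N(i)\cap S=N(i)\cap N(0)=\emptyset$, while $i\notin N(0)\cup\{0\}$ together with $diam(And(k))=2$ gives $d(0,i)=2$, forcing a common neighbour of $0$ and $i$ --- a contradiction. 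Now if $r((i,v_t)\mid W_0)=r((j,v_{t'})\mid W_0)$, then $r(i\mid S)-r(j\mid S)=c(1,\ldots,1)$ for an integer $c$; since all entries lie in $\{0,1,2\}$, (a)--(c) quickly force $c=0$ (if, say, $c>0$, then $r(j\mid S)$ has all entries $\le 1$, so by (a) and (b) it equals $(1,\ldots,1)$ and $j=0$, whence $r(i\mid S)=(2,\ldots,2)$, contradicting (c)). Then $i=j$ (as $S$ resolves $And(k)$) and $d_{C_n}(v_1,v_t)=d_{C_n}(v_1,v_{t'})$, i.e.\ $t+t'\equiv 2\pmod n$. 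Hence $W_0$ resolves every pair of vertices except the ``reflected'' pairs $\{(i,v_t),(i,v_{2-t})\}$ with $t\not\equiv 2-t\pmod n$.

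To eliminate these pairs I would adjoin one vertex $x:=(a,v_2)$ (with $a\in\Bbb{Z}_{3k-1}$ arbitrary), set $W:=W_0\cup\{x\}$, and note $|W|=k+1$ since $x\notin W_0$. For a reflected pair $\{(i,v_t),(i,v_{2-t})\}$ the distances to $x$ are $d_{And(k)}(a,i)+d_{C_n}(v_2,v_t)$ and $d_{And(k)}(a,i)+d_{C_n}(v_2,v_{2-t})$, so $x$ separates it iff $v_2$ is not equidistant from $v_t$ and $v_{2-t}$ on $C_n$. Since the reflection $\sigma\colon t\mapsto 2-t$ is an isometry of $C_n$, the equality $d(v_b,v_t)=d(v_b,v_{\sigma(t)})$ amounts to $v_{\sigma(b)}$ and $v_b$ being equidistant from $v_{\sigma(t)}$; as the indices of $v_b$ and $v_{\sigma(b)}$ sum to $2$, if these two vertices were distinct this would force $2\equiv 2\sigma(t)\pmod n$, i.e.\ $v_t$ fixed by $\sigma$ --- excluded for a reflected pair. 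So for such a pair $v_b$ is equidistant from its two vertices iff $v_b\in\mathrm{Fix}(\sigma)$, and $\mathrm{Fix}(\sigma)$ equals $\{v_1\}$ when $n$ is odd and $\{v_1,v_{1+n/2}\}$ when $n$ is even; since $n\ge 3$ (hence $n\ge 4$ if $n$ is even), $v_2\notin\mathrm{Fix}(\sigma)$, so $x$ separates every reflected pair. Therefore $W$ is a resolving set of $And(k)\square C_n$ of size $k+1$, and $\dim_M(And(k)\square C_n)\le k+1$.

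The distance formula and the cycle bookkeeping are routine; I expect the real work to be establishing facts (a)--(c) about $And(k)$ relative to $S$ and running the case analysis of the constant difference vector $r(i\mid S)-r(j\mid S)$ --- that is, checking that a constant shift of $(1,\ldots,1)$ can never identify two distinct codes of $And(k)$ with respect to $S$.
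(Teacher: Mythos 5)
Your proof is correct and follows essentially the same route as the paper: the lower bound via Corollary 3.2 of \cite{cartesian product} with Theorem \ref{And(k)}, and the upper bound via the resolving set $S\times\{v_1\}$ augmented by one vertex in the $v_2$-copy, after identifying the pairs unresolved by $S\times\{v_1\}$ as exactly the reflected pairs $\{(i,v_t),(i,v_{2-t})\}$ within a column and separating them by the added vertex. If anything, your facts (a)--(c) and the constant-shift analysis make explicit a step the paper only asserts briefly (its appeal to Lemma 3.1 of \cite{cartesian product} and the displayed identity), so your write-up is slightly more detailed but not a different approach.
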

\begin{proof}{
Corollary 3.2 in \cite{cartesian product} using Theorem \ref{And(k)} implies that 
$$k=\max\{\dim_M(C_n),\dim_M(And(k))\}\leq dim_M(And(k)\square C_n).$$
For the upper bound, assume that 
$$V(C_n)=\{v_1,v_2,...,v_n\},~E(C_n)=\{v_1v_2,v_2v_3,...,v_{n-1}v_n,v_nv_1\}$$
and let
$$W'=\{(1,v_1),(4,v_1),(7,v_1),...,(3k-2,v_1)\},~W=W'\cup\{(1,v_2)\}.$$
Using the structure of the Cartesian product of two graphs, for each $i,j\in V(And(k))$ and for each $t,t'\in\{1,2,...,n\}$  we have 
$$d_{And(k) \square C_n} ((i,v_t),(j,v_{t'}))=d_{And(k)}(i,j)+\min\{ |t-t'|, n-|t-t'|\}.$$
Note that (using the proof of Theorem \ref{And(k)} and Lemma 3.1 in \cite{cartesian product}) the projection of $W'$ onto each copy of $And(k)$ in $And(k)\square C_n$ (i.e the induced subgraph on each row)  resolves the vertices of that copy and the projection of $W'$ onto each copy of $C_n$ in $And(k)\square C_n$ (each column) resolves its vertices. 
Also, for each $i$ and for each $1\leq t\leq \lfloor {n\over 2}\rfloor$ we have
$$ r\left((i,v_{1+t})|W'\right)=r\left((i,v_1)|W'\right)+(t,t,...,t)=r\left((i,v_{n-t+1})|W'\right). $$
Thus, distinct vertices in $\{ (i,v_t)|~i\in V(And(k)),~1\leq t\leq \lfloor {n\over 2}\rfloor +1\}$ have 
distinct metric representations with respect to $W'$ (and hence, with respect to $W$) and 
distinct vertices in 
$$\left\{ (i,v_t)|~i\in V(And(k)),~t\in\{ 1,n,n-1,n-2,..., \lceil {n\over 2}\rceil+1 \}\right\}$$
 have 
distinct metric representations with respect to $W'$ (and hence, with respect to $W$).
These facts imply that for each $i$ and for each $1\leq t\leq \lfloor {n\over 2}\rfloor$ the metric representation of the vertex $(i,v_{t+1})$  with respect to $W'$ in $And(k)\square C_n$ is just equal to the code of $(i,v_{n-t+1})$  and no other vertex
(note that when $n$ is even and $t={n\over 2}$ the vertex  $(i,v_{t+1})$ coincides with $(i,v_{n-t+1})$).
Since
$$d_{And(k)\square C_n}((i,v_{t+1}),(1,v_2))=d_{And(k)}(i,1)+t-1$$
 and 
$$d_{And(k)\square C_n}((i,v_{n-t+1}),(1,v_2))=d_{And(k)}(i,1)+\min\{ n-t-1, t+1 \}$$
for distinct vertices $(i,v_{t+1})$ and $(i,v_{n-t+1})$ we have $$r((i,v_{t+1})|W)\neq r((i,v_{n-t+1})|W).$$
Hence, $W$ is a resolving set  for $And(k)\square C_n$ and $\dim_M(And(k)\square C_n)\leq k+1$.
}\end{proof}

In the following theorems we investigate $\dim_M(And(k)\square K_n)$ and $\dim_M(Line(And(k) ) )$.



%


\end{document}